\documentclass[11pt]{article}

\usepackage[T1]{fontenc}
\usepackage{lmodern}
\usepackage{microtype}
\usepackage{amsmath,amssymb,amsthm,mathtools}
\usepackage{xcolor}
\usepackage[margin=1.15in]{geometry}
\usepackage{enumitem}
\usepackage{array,booktabs}
\usepackage{authblk}
\usepackage{tikz}
\usetikzlibrary{arrows.meta,positioning}
\usepackage[colorlinks=true,linkcolor=blue!55!black,citecolor=blue!55!black,
  urlcolor=blue!55!black]{hyperref}

\newtheorem{theorem}{Theorem}[section]
\newtheorem{lemma}[theorem]{Lemma}
\newtheorem{proposition}[theorem]{Proposition}
\newtheorem{corollary}[theorem]{Corollary}
\newtheorem{remark}[theorem]{Remark}
\newtheorem{fact}[theorem]{Fact}
\theoremstyle{definition}
\newtheorem{definition}[theorem]{Definition}
\theoremstyle{plain}
\newtheorem*{mainrestated}{Theorem~\ref{thm:main} (restated)}
\numberwithin{table}{section}
\numberwithin{figure}{section}

\hypersetup{
  pdftitle={Weak Log-Majorization for Negative Lim-Palfia Power Means},
  pdfauthor={Marco Tomamichel},
  pdfsubject={Matrix power means and weak log-majorization},
  pdfkeywords={matrix power mean, weak log-majorization, Schatten quasi-norm},
}

\newcommand{\M}{\mathbb{M}}
\newcommand{\Tr}{\operatorname{Tr}}
\newcommand{\diag}{\operatorname{diag}}

\newcommand{\Sshort}{\mathcal{S}}
\newcommand{\uinorm}[1]{%
  \lvert\!\lvert\!\lvert #1\rvert\!\rvert\!\rvert}
\newcommand{\numberedqedstack}[1]{%
  \vtop{\ialign{\hfil##\hfil\cr
    \textup{(#1)}\cr
    \qedsymbol\cr}}}

\title{Weak Log-Majorization for Negative Lim--P\'alfia Power Means}
\author{Marco Tomamichel\thanks{Email: \texttt{marco.tomamichel@nus.edu.sg}}}
\affil{Department of Electrical and Computer Engineering \& Centre for Quantum
Technologies, National University of Singapore, Singapore}
\date{\today}

\begin{document}
\maketitle

\begin{abstract}
For $0\le\alpha\le1$ and $-1\le r<0$, let $\#_{r,\alpha}$ denote the
weighted Kubo--Ando power mean of order $r$ and weight $\alpha$.  Ando proved
that, for every unitarily invariant norm,
\[
  \uinorm{A\mathbin{\#_{r,\alpha}}B}
  \le
  \uinorm{A}\mathbin{\#_{r,\alpha}}\uinorm{B}.
\]
We show that this inequality remains valid also for the
Schatten $q$-quasi-norms with $0<q<1$.  More generally, our main result
resolves a problem recently posed by Hiai and Lim in the stronger form of weak
log-majorization for multivariable Lim--P\'alfia power means of negative
order. It yields analogous Schatten quasi-norm inequalities for every
finite family of positive definite matrices.
\end{abstract}

\medskip
\noindent\textit{2020 Mathematics Subject Classification.}
Primary 15A42; Secondary 15A60, 47A64.

\smallskip
\noindent\textit{Keywords.}
Matrix power mean, weak log-majorization, Schatten quasi-norm.

\paragraph{Use of artificial intelligence.}
OpenAI Codex with ChatGPT 5.6 Sol was instrumental for literature searches, exploration of proof
strategies, preparation and revision of the manuscript, and development of
the Lean formalization. Claude Fable 5 acted as adversarial reviewer and gave useful feeback. All definitions, statements, proofs and references have been verified by the author, who takes responsibility for
any remaining errors.

\paragraph{Formal verification.}
The proofs of the main result and corollary are formalized in
Lean~4.32.2 \cite{Lean4} using Mathlib~4.32.2 \cite{Mathlib} and checked
by the Lean kernel. The source code is available on GitHub~\cite{TomamichelFormalization}. 
Appendix~\ref{app:lean-correspondence} gives the statement-by-statement
correspondence with the manuscript.

\paragraph*{Acknowledgements.}
MT is supported by the NRF Investigatorship award (NRF-NRFI10-2024-0006).

\newpage

\section{Introduction}

For a positive probability vector $\mathbf w=(w_i)_{i=1}^m$ and a positive
vector $\mathbf a=(a_j)_{j=1}^m$, the scalar power mean of order $r\ne0$
is
\begin{equation}\label{eq:scalar-power-mean}
  p_r(\mathbf w;\mathbf a)
  :=\left(\sum_{j=1}^m w_ja_j^r\right)^{1/r}.
\end{equation}
It interpolates between the weighted harmonic and arithmetic means as
$r$ ranges from $-1$ to $1$, and its limit at $r=0$ is the weighted
geometric mean.  Extending this family to noncommuting positive definite
matrices is not a matter of simply substituting matrices in
\eqref{eq:scalar-power-mean}.  For example, the
expression $(\sum_jw_jA_j^r)^{1/r}$ tends as $r\to0$ to the
Log--Euclidean mean $\exp(\sum_j w_j \log A_j)$, which generally differs
from the Riemannian barycenter, or Karcher mean.

Lim and P\'alfia introduced their matrix power means to obtain
a noncommutative analogue that retains the geometric structure of the
positive definite cone \cite{LimPalfia}.  The construction starts from
the scalar fixed-point identity
$x=\sum_{j=1}^m w_jx^{1-t}a_j^t$ and replaces the product
$x^{1-t}a_j^t$ by the weighted matrix geometric mean, 
\begin{equation}
	A\#_tB:=A^{1/2}(A^{-1/2}BA^{-1/2})^tA^{1/2} \,.
\end{equation}

\begin{definition}[Lim--P\'alfia power means]\label{def:power-means}
Let $\mathbf w=(w_i)_{i=1}^m$ be a positive probability vector and let
$\mathbf A=(A_j)_{j=1}^m$ be an $m$-tuple of positive definite matrices.
For $0<t\le1$, the equation
\begin{equation}\label{eq:positive-fixed-point}
  X=\sum_{j=1}^m w_j\,(X\#_tA_j).
\end{equation}
has a unique positive definite solution
\cite[Theorem~3.1 and Definition~3.2]{LimPalfia}.  The positive-order
Lim--P\'alfia power mean $P_t(\mathbf w;\mathbf A)$ is defined to be this
solution.  Write $\mathbf A^{-1}:=(A_j^{-1})_{j=1}^m$.  For
$-1\le r<0$, the power mean is defined by duality as
\begin{equation}\label{eq:negative-duality}
  P_r(\mathbf w;\mathbf A)
  :=P_{-r}(\mathbf w;\mathbf A^{-1})^{-1}.
\end{equation}
\end{definition}

The endpoints $P_{-1}$ and $P_1$ are respectively the weighted harmonic
and arithmetic means \cite[Remark~3.3]{LimPalfia}.  The original purpose
of the construction was to furnish a family of matrix means converging
to the Karcher mean as $r\to0$ \cite[Theorem~4.3]{LimPalfia}.

The question considered here asks how this genuinely noncommutative mean
compares with the ordinary scalar power mean at the level of spectra.
For a Hermitian matrix $A$, write
$\lambda_1^{\downarrow}(A)\ge\cdots\ge\lambda_n^{\downarrow}(A)$ for its
eigenvalues in decreasing order and set
$\lambda_i^\downarrow(\mathbf A):=
(\lambda_i^\downarrow(A_j))_{j=1}^m$.  
The question belongs to a broader program of Hiai and Lim~\cite{HiaiLim}. Their
weak-majorization result for positive power means states that, for
$0<r\le1$,\footnote{This is Proposition~7.5 of the arXiv version of their paper~\cite{HiaiLimArxiv}; that section is
not in the published version.}
\begin{equation}\label{eq:positive-weak-majorization}
\left( \lambda_i^\downarrow  \bigl(P_r(\mathbf w;\mathbf A)\bigr) \right)_{i=1}^n
 \prec_w
 \left(
   p_r\bigl(\mathbf w;\lambda_i^\downarrow(\mathbf A)\bigr)
 \right)_{i=1}^n.
\end{equation}
Here weak majorization, $x\prec_w y$, means that the sum of the $k$ largest entries of $x$
does not exceed the corresponding sum for $y$, for every $k$. Hiai and Lim observed that
duality gives a complementary inverse inequality for negative orders,
but not the desired weak majorization, and posed the latter for
$-1<r<0$ as an open problem.  To state our result,
recall that for positive vectors $x,y\in(0,\infty)^n$, one writes
$x\prec_{w\log}y$ if
$\prod_{i=1}^k x_i^{\downarrow}\le\prod_{i=1}^k y_i^{\downarrow}$ for
$1\le k\le n$.
Weak log-majorization implies weak majorization, i.e., $x\prec_{w\log}y \Longrightarrow x\prec_wy$.
Our main result shows that weak log-majorization holds for power means of
negative order.

\begin{theorem}\label{thm:main}
Let $\mathbf w=(w_i)_{i=1}^m$ be a positive probability vector and let
$\mathbf A=(A_j)_{j=1}^m$ be an $m$-tuple of positive definite matrices
in $\M_n$.  For every $-1\le r<0$,
\begin{equation}\label{eq:main}
\left( \lambda_i^\downarrow  \bigl(P_r(\mathbf w;\mathbf A)\bigr) \right)_{i=1}^n
 \prec_{w\log}
 \left(
   p_r\bigl(\mathbf w;\lambda_i^{\downarrow}(\mathbf A)\bigr)
 \right)_{i=1}^n.
\end{equation}
\end{theorem}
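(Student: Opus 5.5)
Set $s:=-r\in(0,1]$, $B_j:=A_j^{-1}$ and $X:=P_s(\mathbf w;\mathbf B)$, so that $P_r(\mathbf w;\mathbf A)=X^{-1}$ by \eqref{eq:negative-duality}. Since $p_r(\mathbf w;\mathbf a)=p_s(\mathbf w;\mathbf a^{-1})^{-1}$, and the $k$ largest eigenvalues of $X^{-1}$ are the reciprocals of the $k$ smallest eigenvalues of $X$, \eqref{eq:main} is equivalent to a lower bound on products of smallest eigenvalues. Write $\lambda^\uparrow$ for increasing order and $f_k(Y):=\prod_{i=1}^k\lambda_i^\uparrow(Y)$. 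The claim becomes
\[
  f_k(X)\;\ge\;\prod_{i=1}^k p_s\bigl(\mathbf w;\lambda_i^\uparrow(\mathbf B)\bigr),\qquad 1\le k\le n .
\]
I plan to prove this from the fixed-point equation $X=\sum_j w_j\,(X\#_sB_j)$ of \eqref{eq:positive-fixed-point}. Two tools are available.

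\emph{Geometric means and compounds.} We have $f_k(Y)=\lambda_{\min}(\Lambda^kY)$ and $\Lambda^k(Y\#_sZ)=\Lambda^kY\#_s\Lambda^kZ$. Monotonicity of $\#_s$ then gives $f_k(Y\#_sZ)\ge f_k(Y)^{1-s}f_k(Z)^s$.

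\emph{Sums.} Write $f_k(Y)=\min_{\dim V=k}\det(Y|_V)$. Since $\det^{1/k}$ is concave, $f_k^{1/k}$ is concave.

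A naive combination of the two tools gives
\[
  f_k(X)^{s/k}\;\ge\;\sum_j w_j\,f_k(B_j)^{s/k}.
\]
This bounds $f_k(X)$ by a power mean of the geometric means $f_k(B_j)^{1/k}$. Because $\mathbf a\mapsto\log p_s(\mathbf w;e^{\mathbf a})$ is convex for $s>0$, this bound is \emph{weaker} than the target $\prod_i p_s(\mathbf w;\lambda_i^\uparrow(\mathbf B))$. So Minkowski must be avoided and the eigenvalues kept individually.

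The refined plan keeps a diagonal structure. Fix $k$ and let $V$ be the spectral subspace of $X$ for its $k$ smallest eigenvalues $\mu_1\le\dots\le\mu_k$, with projection $Q$. Compressing the fixed-point equation gives
\[
  \diag(\mu)=\sum_j w_j\,Q(X\#_sB_j)Q .
\]
I would then use the variational/integral representation of $X\#_sB_j$ to bound each compression from below. The representation is
\[
  X\#_sB=\frac{\sin \pi s}{\pi}\int_0^\infty t^{s-1}\,X(tX+B)^{-1}B\,dt,
\]
or, alternatively, the extremal characterization of $X\#_sB$ via positive block matrices when $s=1/2$, combined with dyadic iteration. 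The goal is a matrix inequality on $V$ of the form
\[
  Q(X\#_sB_j)Q\;\ge\;\diag(\mu)^{1-s}\#\,\bigl(\text{compression of }B_j\bigr)^{s}\quad\text{(in a suitable sense).}
\]
Taking determinants on $V$ and comparing $\det Q B_jQ$ with $f_k(B_j)$ via Cauchy interlacing should then give inequalities for $\mu$ eigenvalue by eigenvalue. Combined with a Ky Fan/Horn-type argument, these would give $\prod_{i\le k}\mu_i^s\ge\prod_{i\le k}\sum_j w_j\lambda_i^\uparrow(B_j)^s$ after rearrangement. As a fallback, I would run the same argument as an invariance statement for the Picard iteration $X\mapsto\sum_jw_j(X\#_sB_j)$. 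This map is a strict contraction for the Thompson metric and converges to $P_s$. I would show that the closed set of $Y$ satisfying the log-supermajorization bound is mapped into itself, starting from the arithmetic mean, where the bound follows from Ky Fan.

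The main obstacle is the sum step. Log-majorization is multiplicative and behaves well under $\#_s$ via compounds, but it does not pass through sums without Minkowski losses, which are exactly what the naive route suffers. I expect the key lemma to be a two-matrix compression inequality: for $Y>0$ with spectral projection $Q$ onto its $k$ smallest eigenvalues, $\det\bigl(Q(Y\#_sB)Q\bigr)$ should be bounded below by the scalar mixture of $\lambda^\uparrow(Y)^{1-s}$ and $\lambda^\uparrow(B)^s$ coordinatewise. Proving this lemma in a form that survives the weighted sum over $j$ is where I would concentrate effort, first in the commuting case, then for $s=1/2$, then by interpolation in $s$.
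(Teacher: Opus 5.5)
\textbf{There is a genuine gap: the proposal stops before the step that carries the proof.} Three parts of your plan match the paper. The duality reduction to $f_k(X)\ge\prod_{i\le k}p_s\bigl(\mathbf w;\lambda_i^\uparrow(\mathbf B)\bigr)$ is the paper's. Localizing to the spectral subspace of the $k$ smallest eigenvalues of $X$ is the paper's choice of $V$. Your diagnosis of the Minkowski loss is correct. But the lemma you aim for cannot close the gap, for two reasons.

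First, if ``compression of $B_j$'' means $V^*B_jV$, your desired inequality points the wrong way. Fact~\ref{fact:ando-compression} gives $V^*(X\#_sB_j)V\le(V^*XV)\#_s(V^*B_jV)$. This is strict in general, even when $V$ spans a bottom eigenspace. For example, take $X=\diag(1,2)$, $V=e_1$, $s=\tfrac12$ and $M=X^{-1/2}B_jX^{-1/2}$. The two sides are $\langle e_1,M^{1/2}e_1\rangle$ and $\langle e_1,Me_1\rangle^{1/2}$, which differ unless $e_1$ is an eigenvector of $M$.

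Second, the determinant-level lemma you single out, $\det V^*(Y\#_sB)V\ge\prod_{i\le k}\lambda_i^\uparrow(Y)^{1-s}\lambda_i^\uparrow(B)^s$, already follows from your compound bound: $\det V^*(Y\#_sB)V\ge f_k(Y\#_sB)\ge f_k(Y)^{1-s}f_k(B)^s$. It records only one number per $j$. From the determinants of the summands alone, the sharp lower bound for $\det\sum_jw_jM_j$ is Minkowski's, which is exactly the naive bound you rejected. You do not show that the integral representation, dyadic interpolation, or the Picard-iteration fallback escapes this. The invariance step of the iteration faces the same sum problem.

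The paper supplies the two missing ideas.

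(i) Use the \emph{shorted} matrices $C_j:=\Sshort_V(B_j)=(V^*B_j^{-1}V)^{-1}$ instead of compressions.
\begin{itemize}
\item Apply Ando's inequality to the inverses and invert; this gives $\Sshort_V(X\#_sB_j)\ge\Sshort_V(X)\#_s\Sshort_V(B_j)$.
\item Shorting is superadditive over weighted sums.
\item Because $V$ reduces $X$, $\Sshort_V(X)=V^*XV=:X_k$.
\end{itemize}
Shorting the fixed-point equation therefore gives $X_k\ge\sum_jw_j(X_k\#_sC_j)$. So $X_k$ is a \emph{supersolution} of the $k$-dimensional power-mean equation with inputs $C_j$. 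Interlacing applied to $V^*B_j^{-1}V$ gives $c_{j,i}:=\lambda_i^\uparrow(C_j)\ge\beta_{j,i}:=\lambda_i^\uparrow(B_j)$.

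(ii) Do the sum over $j$ with traces, not determinants.
\begin{itemize}
\item Congruence by $X_k^{-1/2}$ turns the supersolution inequality into $\sum_jw_jD_j^s\le I$, where $D_j:=X_k^{-1/2}C_jX_k^{-1/2}$. Hence $k\ge\sum_jw_j\Tr D_j^s$.
\item The multiplicative Lidskii inequality plus Karamata for $x\mapsto e^{sx}$ gives the eigenvalue-paired bound $\Tr D_j^s\ge\sum_i(c_{j,i}/\mu_i)^s$, with $c_{j,i}$ and $\mu_i$ ordered alike.
\item Summing over $j$ first gives $k\ge\sum_i\bigl(p_s(\mathbf w;\mathbf c_i)/\mu_i\bigr)^s$, where $\mathbf c_i:=(c_{j,i})_{j=1}^m$.
\item Only then does AM--GM over $i$ yield $\det X_k\ge\prod_ip_s(\mathbf w;\mathbf c_i)\ge\prod_ip_s(\mathbf w;\boldsymbol\beta_i)$, with $\boldsymbol\beta_i:=(\beta_{j,i})_{j=1}^m$.
\end{itemize}
The trace is linear, and Lidskii builds in the eigenvalue pairing. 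Together these let the weighted sum be taken coordinatewise in $i$ with no Minkowski loss. This is the step your plan leaves open.
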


The weak log-majorization \eqref{eq:main} is sharp already in the
commutative setting.  When the $A_j$ are simultaneously diagonalizable
and their decreasing eigenvalue lists are aligned in a common eigenbasis,
the fixed-point equation is scalar in each coordinate and equality holds
in \eqref{eq:main}.

Consequently, the weak majorization in the Hiai--Lim preprint
\cite[Problem~7.7]{HiaiLimArxiv} holds in this stronger form. We note that for $r > 0$, this
cannot be strengthened to weak log-majorization, even for two
commuting $2\times2$ matrices. A counterexample is obtained by taking
$B_1=\diag(a,b)$ and $B_2=\diag(b,a)$ for $a>b>0$, and
$\mathbf w=(\tfrac12,\tfrac12)$.  In this case
$P_r(\mathbf w;(B_1,B_2))=p_r(\mathbf w;(a,b))I_2$, whereas the scalar
comparator is $(a,b)$.  The full-product inequality would therefore require
$p_r(\mathbf w;(a,b))^2\le ab$, contrary to
$p_r(\mathbf w;(a,b))>p_0(\mathbf w;(a,b))=\sqrt{ab}$ for $r>0$.
There is a related multivariable result of Jeong and Kim
\cite[Proposition~3.3]{JeongKim}: for $-1\le r<0$, they showed that the
Lim--P\'alfia power mean is weakly log-majorized by the Log--Euclidean
mean. 

\begin{remark}[Extension to probability measures]
\label{rem:probability-measures}
The same argument should extend from finite support to the bounded-support
probability measures considered by Hiai and Lim, using their approximation
and continuity framework; see
\cite[Theorem~2.9, Section~5.2, and Remark~5.2]{HiaiLim} for the relevant
methods.  We do not pursue the measure-theoretic formulation here.
\end{remark}

One consequence of weak log-majorization is the following inequality for quasi-norms.

\begin{corollary}[Schatten quasi-norm inequality]
\label{cor:schatten-quasinorm}
Let $\mathbf w=(w_i)_{i=1}^m$ be a positive probability vector and let
$\mathbf A=(A_j)_{j=1}^m$ be an $m$-tuple of positive definite matrices
in $\M_n$.  For every $-1\le r<0$ and $q>0$,
\begin{equation}\label{eq:schatten-quasinorm}
 \bigl\|P_r(\mathbf w;\mathbf A)\bigr\|_q
 \le
 p_r\bigl(\mathbf w;(\|A_j\|_q)_{j=1}^m\bigr),
\end{equation}
where $\|\cdot\|_q$ denotes the Schatten $q$-norm for $q\ge1$ and the
Schatten $q$-quasi-norm for $0<q<1$.
\end{corollary}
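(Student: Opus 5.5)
We derive the corollary from Theorem~\ref{thm:main}. Write $X:=P_r(\mathbf w;\mathbf A)$, $x_i:=\lambda_i^\downarrow(X)$, and $y_i:=p_r(\mathbf w;\lambda_i^\downarrow(\mathbf A))$. Each $y_i$ is a scalar power mean of decreasingly ordered families, so $y_1\ge\cdots\ge y_n$. Since $X$ is positive definite, $\|X\|_q^q=\sum_i x_i^q$. Theorem~\ref{thm:main} gives $x\prec_{w\log}y$.

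\textbf{Step 1: pass to the $q$-th powers.} The map $t\mapsto e^{qt}$ is convex and increasing. Applying it to the weak majorization $\log x\prec_w\log y$ gives $x^q\prec_w y^q$. In particular,
\[
\sum_i x_i^q\le\sum_i y_i^q .
\]
This is the standard fact that weak log-majorization implies weak majorization of every positive power. Hence $\|X\|_q\le\bigl(\sum_i y_i^q\bigr)^{1/q}$.

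\textbf{Step 2: a scalar inequality.} It remains to show
\[
\Bigl(\sum_{i=1}^n p_r(\mathbf w;\mathbf a_i)^q\Bigr)^{1/q}\le p_r\Bigl(\mathbf w;\bigl(\|\mathbf a^{(j)}\|_{\ell^q}\bigr)_j\Bigr),
\]
where $\mathbf a_i=(a_{ij})_j$ with $a_{ij}=\lambda_i^\downarrow(A_j)$, and $\mathbf a^{(j)}=(a_{ij})_i$ is the eigenvalue vector of $A_j$. Note that $\|\mathbf a^{(j)}\|_{\ell^q}=\|A_j\|_q$.

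Set $s:=-r\in(0,1]$. Then $p_r(\mathbf w;\mathbf a_i)=\bigl(\sum_j w_j a_{ij}^{-s}\bigr)^{-1/s}$. Put $b_{ij}:=a_{ij}^{-s}$ and $\rho:=q/s>0$. The inequality becomes
\[
\sum_i\Bigl(\sum_j w_jb_{ij}\Bigr)^{-\rho}\le\Bigl(\sum_j w_j\Bigl(\sum_i b_{ij}^{-\rho}\Bigr)^{-1/\rho}\Bigr)^{-\rho}.
\]
Equivalently,
\[
\Bigl\|\Bigl(\bigl(\textstyle\sum_j w_jb_{ij}\bigr)^{-1}\Bigr)_i\Bigr\|_{\ell^\rho}\le\Bigl(\textstyle\sum_j w_j\|(b_{ij}^{-1})_i\|_{\ell^\rho}^{-1}\Bigr)^{-1}.
\]
The function $F(v):=\|v^{-1}\|_{\ell^\rho}^{-1}=\bigl(\sum_i v_i^{-\rho}\bigr)^{-1/\rho}$ is a negative-order power sum on the positive orthant. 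It is positively homogeneous of degree one and concave, since power means of negative order are concave. The required inequality reads
\[
F\Bigl(\textstyle\sum_j w_j\mathbf b^{(j)}\Bigr)\ge\textstyle\sum_j w_jF(\mathbf b^{(j)}),
\]
which is exactly this concavity.

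\textbf{Main obstacle.} The delicate point is the concavity of $v\mapsto(\sum_i v_i^{-\rho})^{-1/\rho}$ for every $\rho>0$. I would prove it by homogeneity: it suffices to check that the superlevel set $\{F\ge1\}=\{\sum_i v_i^{-\rho}\le1\}$ is convex. This set is convex because $v\mapsto v^{-\rho}$ is convex, so $\sum_i v_i^{-\rho}$ is a convex function and its sublevel set is convex. A 1-homogeneous positive function with convex superlevel set is concave. This gives the reverse-Minkowski step. Note that no convexity of the quasi-norm itself is needed, because the matrix part is handled entirely by Step~1. This is why the result also holds for $0<q<1$.
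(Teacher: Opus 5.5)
Your proof is correct and follows the paper's route. Step~1 is exactly the paper's Karamata step with $f(u)=e^{qu}$, and Step~2 is the same reverse-Minkowski inequality obtained from concavity plus positive homogeneity of a negative-order power mean. The one variation is that you transpose the scalar step: you substitute $b_{ij}=a_{ij}^{r}$ and apply Jensen with weights $w_j$ to the order-$q/r$ power sum in the index $i$, proving concavity via convex superlevel sets. The paper instead writes $p_r(\mathbf w;\mathbf a_i)^q=p_{r/q}(\mathbf w;(a_{ij}^q)_j)$ and sums the superadditivity of the weighted mean $p_{r/q}$ over $i$, with concavity justified via the Hessian and Cauchy--Schwarz.
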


For $q\ge1$, Corollary~\ref{cor:schatten-quasinorm} is contained in~\cite[Proposition~7.2]{HiaiLim}.  Its new content
is the quasi-norm range $0<q<1$.

For two inputs, $\mathbf w=(1-\alpha,\alpha)$ and $\mathbf A=(A,B)$, the
Lim--P\'alfia mean is the weighted Kubo--Ando power mean
\begin{equation}\label{eq:binary-power-mean}
 P_r\bigl((1-\alpha,\alpha);(A,B)\bigr)
 =A\mathbin{\#_{r,\alpha}}B :=A^{1/2}\left[(1-\alpha)I+
   \alpha\bigl(A^{-1/2}BA^{-1/2}\bigr)^r\right]^{1/r}A^{1/2}.
\end{equation}
Congruence normalization of the fixed-point equation gives
\eqref{eq:binary-power-mean} for positive order.  For $r<0$, apply that
identity to $A^{-1}$ and $B^{-1}$ and use binary duality,
$(A\mathbin{\#_{r,\alpha}}B)^{-1}
=A^{-1}\mathbin{\#_{-r,\alpha}}B^{-1}$.
Ando's inequality for operator means
\cite[Eq.~(3.13)]{AndoMajorizations} gives the unitarily invariant norm case;
Corollary~\ref{cor:schatten-quasinorm} extends it to Schatten quasi-norms.

For related two-variable spectral-product questions and operator power
means, see Bourin and Hiai
\cite[Remark~3.4 and Example~3.11]{BourinHiai}.

The proof has two problem-specific ingredients.  First, a multiplicative
Lidskii inequality yields a determinant estimate pairing the
correspondingly ordered eigenvalues of all inputs.  We prove this estimate
directly for positive supersolutions of the power-mean equation.  Second, shorting to
the bottom spectral subspace of the power mean converts this estimate
into inequalities for every lower partial eigenvalue product. 


\section{Preliminaries}\label{sec:standard-tools}

The proof uses the Lim--P\'alfia fixed-point result recorded in
Definition~\ref{def:power-means}, Ando's inequality for positive maps, the
lower multiplicative Lidskii inequality, and Cauchy's interlacing theorem.
We state the latter three results here with the precise forms and references
needed later.

\subsection{Ando's positive-map inequality}

Let $V:\mathbb C^k\to\mathbb C^n$ be an isometry, so $V^*V=I_k$.
The compression map $\Phi(Z)=V^*ZV$ is unital and completely positive.
The following result compares the geometric mean before and after such
a compression.

\begin{fact}[Ando's compression inequality]
\label{fact:ando-compression}
For positive definite $A,B\in\M_n$, an isometry
$V:\mathbb C^k\to\mathbb C^n$, and $0\le t\le1$,
\begin{equation}\label{eq:ando-compression}
  V^*(A\#_tB)V
  \le (V^*AV)\#_t(V^*BV).
\end{equation}
This is Ando's Jensen inequality for normalized positive linear maps
\cite[Theorem~4]{AndoPositiveMaps}, applied to the compression
$\Phi(Z)=V^*ZV$ and the operator-concave function $x\mapsto x^t$.
\end{fact}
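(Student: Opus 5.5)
The plan is to reduce \eqref{eq:ando-compression} to the Hansen--Pedersen Jensen operator inequality for compressions by an isometry. Throughout write $f(x)=x^t$. By the Löwner--Heinz theorem, $f$ is operator monotone and operator concave on $[0,\infty)$ for $0\le t\le1$, and $f(0)\ge0$. The case $t=0$ is trivial, since both sides equal $V^*AV$, so assume $0<t\le1$.

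\emph{Step 1 (normalization).} Set $C:=A^{-1/2}BA^{-1/2}>0$ and $A_V:=V^*AV$. The matrix $A_V$ is positive definite because $V$ is injective. Define
\[
  W:=A^{1/2}V A_V^{-1/2}:\mathbb C^k\to\mathbb C^n .
\]
A one-line check gives $W^*W=A_V^{-1/2}V^*AVA_V^{-1/2}=I_k$, so $W$ is again an isometry. We then rewrite both sides of \eqref{eq:ando-compression} through $W$. For the left side,
\[
  A_V^{-1/2}\,V^*(A\#_tB)V\,A_V^{-1/2}
  =A_V^{-1/2}V^*A^{1/2}f(C)A^{1/2}VA_V^{-1/2}=W^*f(C)W .
\]
For the right side, $W^*CW=A_V^{-1/2}(V^*BV)A_V^{-1/2}$, so
\[
  A_V^{-1/2}\bigl[(V^*AV)\#_t(V^*BV)\bigr]A_V^{-1/2}=f(W^*CW).
\]

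\emph{Step 2 (Jensen's operator inequality).} The Hansen--Pedersen theorem states that an operator concave $f$ on $[0,\infty)$ with $f(0)\ge0$ satisfies $W^*f(C)W\le f(W^*CW)$ for every isometry $W$ and every $C\ge0$. The isometric case is the form where the condition $f(0)\ge0$ is needed. If we prefer to avoid citing it, it can be proved in the standard way. Dilate $W$ to the unitary
\[
  U=\begin{pmatrix}W & I-WW^*\\ 0 & -W^*\end{pmatrix},
\]
or more conveniently to a unitary on $\mathbb C^n\oplus\mathbb C^n$ whose $(1,1)$ corner, after pinching, recovers $W^*CW$. Then apply operator concavity to the average of $U^*(C\oplus0)U$ with its conjugate by $\mathrm{diag}(I,-I)$. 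Finally use $f(0)\ge0$ to discard the zero block.

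\emph{Step 3 (undo the normalization).} Conjugating the inequality of Step~2 by $A_V^{1/2}$ preserves the Loewner order. By the identities of Step~1, this gives exactly $V^*(A\#_tB)V\le(V^*AV)\#_t(V^*BV)$.

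The main obstacle is Step~2, namely justifying Jensen's inequality for a non-unitary isometry. The clean route is to cite Hansen--Pedersen, or Choi's inequality for the unital positive map $Z\mapsto W^*ZW$. Doing the dilation by hand requires care with the $f(0)\ge0$ hypothesis; for $f(x)=x^t$ that hypothesis holds trivially.
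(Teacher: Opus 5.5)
Your proof is correct, but it takes a different route from the paper. The paper gives no argument: it simply invokes Ando's inequality $\Phi(A\#_tB)\le\Phi(A)\#_t\Phi(B)$ for normalized positive linear maps, with $\Phi(Z)=V^*ZV$.

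You instead unpack that result in the compression case. The congruence normalization with $W=A^{1/2}VA_V^{-1/2}$ turns the claim into the Davis--Choi/Hansen--Pedersen inequality $W^*f(C)W\le f(W^*CW)$ for the unital map $Z\mapsto W^*ZW$. Your identities for both sides, the check $W^*W=I_k$, and the final conjugation by $A_V^{1/2}$ are all right. This is in fact the standard proof of Ando's theorem, via $\Psi(Z)=\Phi(A)^{-1/2}\Phi(A^{1/2}ZA^{1/2})\Phi(A)^{-1/2}$, specialized to compressions.

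Your version is self-contained: it needs only L\"owner--Heinz concavity of $x^t$ and the dilation/pinching argument, which you sketch correctly. The citation, in exchange, gives more generality (arbitrary normalized positive maps and arbitrary operator means).

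One correction. The hypothesis $f(0)\ge0$ is what Jensen's inequality needs for \emph{contractions}, not for isometries. For isometries (unital maps) it holds for every operator concave $f$. In your own dilation $U=\left(\begin{smallmatrix}W & I-WW^*\\ 0 & -W^*\end{smallmatrix}\right)$, the lower-left block is zero. Note that this $U$ is a unitary from $\mathbb C^k\oplus\mathbb C^n$ onto $\mathbb C^n\oplus\mathbb C^k$, not a unitary on $\mathbb C^n\oplus\mathbb C^n$. Because that block vanishes, the $(1,1)$ corner of $U^*\bigl(f(C)\oplus f(0)I_k\bigr)U$ is exactly $W^*f(C)W$, so $f(0)$ never enters and there is no zero block to discard. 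Since $f(0)=0$ for $x^t$, the slip is harmless.
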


The direction in \eqref{eq:ando-compression} reflects operator
concavity.  Later we apply it to inverse matrices and then invert the
result; this produces the corresponding inequality for shorted
geometric means.

\subsection{Multiplicative Lidskii inequality}

\begin{fact}[Lower multiplicative Lidskii inequality]
\label{fact:multiplicative-lidskii}
If $A,B\in\M_n$ are positive definite, then
\begin{equation}\label{eq:multiplicative-lidskii}
 \bigl(\log\lambda_i^{\downarrow}(A)
       -\log\lambda_i^{\downarrow}(B)\bigr)_{i=1}^n
 \prec
 \bigl(\log\lambda_i^{\downarrow}
   (B^{-1/2}AB^{-1/2})\bigr)_{i=1}^n,
\end{equation}
where the vector on the left is rearranged decreasingly in the definition
of majorization.  The partial-sum inequalities in
\eqref{eq:multiplicative-lidskii} are the positive definite, logarithmic
form of Li and Mathias
\cite[Theorem~2.3, upper inequality in Eq.~(2.4)]{LiMathias}.  For the full
sum, both sides are equal to $\log\det A-\log\det B$.  Thus
\eqref{eq:multiplicative-lidskii} is an ordinary majorization relation.
\end{fact}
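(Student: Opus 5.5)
The plan is to reduce \eqref{eq:multiplicative-lidskii} to a determinant comparison on $k$-dimensional subspaces, and then combine it with a multiplicative version of Wielandt's minimax principle.

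Set $C:=B^{-1/2}AB^{-1/2}$, so that $A=B^{1/2}CB^{1/2}$ with $C$ positive definite. The total-sum identity is immediate: $\det A=\det B\cdot\det C$, so both sides of \eqref{eq:multiplicative-lidskii} sum to $\log\det A-\log\det B$.

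It remains to prove the partial-sum inequalities. For every $k$ and every index set $I\subseteq\{1,\dots,n\}$ with $|I|=k$, the goal is
\[
 \prod_{i\in I}\lambda_i^{\downarrow}(A)
 \le \prod_{i\in I}\lambda_i^{\downarrow}(B)\cdot\prod_{i=1}^k\lambda_i^{\downarrow}(C).
\]
Taking the maximum over $I$ gives exactly the statement that the sum of the $k$ largest entries of the rearranged left-hand vector is at most the corresponding sum on the right.

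\textbf{Step 1 (compressions).} Let $V:\mathbb C^k\to\mathbb C^n$ be an isometry. Define
\[
 W:=B^{1/2}V(V^*BV)^{-1/2},
\]
which is again an isometry. Then $V^*AV=(V^*BV)^{1/2}\,W^*CW\,(V^*BV)^{1/2}$, and therefore
\[
 \det(V^*AV)=\det(V^*BV)\det(W^*CW)\le \det(V^*BV)\prod_{i=1}^k\lambda_i^{\downarrow}(C).
\]
The final inequality follows from Cauchy interlacing, which gives $\lambda_i^{\downarrow}(W^*CW)\le\lambda_i^{\downarrow}(C)$ for each $i$.

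\textbf{Step 2 (multiplicative Wielandt minimax).} Write $I=\{i_1<\dots<i_k\}$. The principle I would use is
\[
 \prod_{i\in I}\lambda_i^{\downarrow}(X)=\max_{\mathcal F}\ \min_{V}\det(V^*XV).
\]
Here $\mathcal F$ ranges over chains of subspaces $F_1\subset\dots\subset F_k$ with $\dim F_j=i_j$. The isometry $V$ ranges over those whose range $U$ satisfies $\dim(U\cap F_j)\ge j$ for all $j$.

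This principle is obtained from the additive Wielandt minimax in the same way that the product version of Ky Fan follows from the sum version. In the ``min'' direction, with $\mathcal F$ the span of the top eigenvectors of $X$, one uses Cauchy interlacing on a suitable basis of $U$ adapted to the flag. In the ``max'' direction, for an arbitrary flag one constructs a $U$ inside the flag and inside the appropriate bottom eigenspaces, by the usual dimension-counting argument.

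\textbf{Step 3 (combination).} Fix the flag $\mathcal F$ that is optimal for $A$. By Step 1, for every admissible $V$,
\[
 \det(V^*AV)\le \det(V^*BV)\,c,\qquad c:=\prod_{i\le k}\lambda_i^{\downarrow}(C).
\]
Taking the minimum over admissible $V$ on both sides gives
\[
 \prod_{i\in I}\lambda_i^{\downarrow}(A)\le c\cdot\min_V\det(V^*BV).
\]
Since $\mathcal F$ is just one flag for $B$, the right-hand minimum is at most $\prod_{i\in I}\lambda_i^{\downarrow}(B)$ by Step 2. Taking logarithms yields the required partial-sum inequality, which completes the majorization.

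The main obstacle is Step 2. Its multiplicative form with arbitrary index sets $I$ is less standard than the Ky Fan case $I=\{1,\dots,k\}$.

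A fallback is to avoid the minimax principle altogether. Set $X=C^{1/2}$ and $Y=B^{1/2}$. Then $(XY)^*(XY)=A$, so the singular values satisfy $s_i(XY)^2=\lambda_i^{\downarrow}(A)$, $s_i(Y)^2=\lambda_i^{\downarrow}(B)$ and $s_i(X)^2=\lambda_i^{\downarrow}(C)$. The Gel'fand--Naimark/Horn majorization
\[
 \bigl(\log s_i(XY)-\log s_i(Y)\bigr)_{i=1}^n\prec\bigl(\log s_i(X)\bigr)_{i=1}^n
\]
then gives \eqref{eq:multiplicative-lidskii} after multiplying by $2$.
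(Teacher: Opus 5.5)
Your proposal is correct, and both of your routes differ from the paper's. The paper gives no argument for the partial sums. It identifies them with the upper inequality of Li--Mathias's Theorem~2.3, a multiplicative Lidskii--Wielandt bound for congruent matrices. Like you, it settles the full sum via $\det A=\det B\,\det(B^{-1/2}AB^{-1/2})$.

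Your fallback is the quickest fully rigorous version, and it is essentially the mechanism behind that citation. Writing $A=(C^{1/2}B^{1/2})^*(C^{1/2}B^{1/2})$ reduces everything to the Gel'fand--Naimark inequality $\prod_{j\in I}s_j(XY)\le\prod_{j\in I}s_j(Y)\prod_{j\le k}s_j(X)$. If your reference places the index set on the other factor, pass to $(XY)^*=Y^*X^*$. So this route trades a specialized reference for a standard textbook one.

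Your main route gives a self-contained proof modelled on the classical derivation of additive Lidskii from Wielandt's minimax. It uses only Courant--Fischer, Cauchy interlacing and the factorization $V^*AV=(V^*BV)^{1/2}W^*CW(V^*BV)^{1/2}$, which is close in spirit to the compression and shorting arguments of Section~\ref{sec:lower-products}.

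Two refinements of Step~2 are worth recording.
\begin{itemize}
\item Step~3 needs only one-sided facts, so no max--min identity and no attainment are required. Let $F_j$ be spanned by the top $i_j$ eigenvectors of $A$. Every admissible $U$ then satisfies $\lambda_j^{\downarrow}(V^*AV)\ge\lambda_{i_j}^{\downarrow}(A)$. You also need that some admissible $U$ satisfies $\lambda_j^{\downarrow}(V^*BV)\le\lambda_{i_j}^{\downarrow}(B)$.
\item That second fact is exactly the nontrivial intersection lemma in Wielandt's theorem. Let $W_j$ be spanned by the eigenvectors of $B$ with indices $i_j,\dots,n$. The lemma supplies a $k$-dimensional $U$ with $\dim(U\cap F_j)\ge j$ and $\dim(U\cap W_j)\ge k-j+1$. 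It requires an inductive dimension count, not a one-line one.
\end{itemize}
Also, ``$U$ inside the bottom eigenspaces'' cannot be read as choosing orthonormal $x_j\in F_j\cap W_j$. For $n=k=2$, $A=\diag(2,1)$, $B=\diag(1,2)$ this is impossible, since both $x_1$ and $x_2$ would have to lie on the line spanned by $e_1$.
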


The inequality pairs the decreasing eigenvalues of $A$ and $B$ before
comparing them with the spectrum of the relative matrix
$B^{-1/2}AB^{-1/2}$.  Karamata's inequality for the exponential function
turns this logarithmic majorization directly into the trace estimate used
below.

\subsection{Cauchy's interlacing theorem}

For an isometry $V:\mathbb C^k\to\mathbb C^n$, the matrix $V^*HV$ is the
compression of $H$ to the range of $V$, after identifying that range with
$\mathbb C^k$.  Cauchy's interlacing theorem gives the following bounds.

\begin{fact}[Cauchy's interlacing theorem]
\label{fact:cauchy-interlacing}
Let $H\in\M_n$ be Hermitian and let
$V:\mathbb C^k\to\mathbb C^n$ be an isometry.  Then, for $1\le i\le k$,
\begin{equation}\label{eq:cauchy-interlacing}
 \lambda_i^\downarrow(H)
 \ge \lambda_i^\downarrow(V^*HV)
 \ge \lambda_{n-k+i}^\downarrow(H).
\end{equation}
This is Bhatia \cite[\S26, Theorem~26.1]{BhatiaPerturbation}.
\end{fact}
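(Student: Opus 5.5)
We plan to derive both inequalities in \eqref{eq:cauchy-interlacing} from the Courant--Fischer min-max characterization of eigenvalues, using only the fact that an isometry maps subspaces of $\mathbb C^k$ to subspaces of $\mathbb C^n$ of the same dimension and preserves norms. For Hermitian $H\in\M_n$ and $1\le j\le n$ we will use
\[
 \lambda_j^\downarrow(H)
 =\max_{\substack{S\subseteq\mathbb C^n\\ \dim S=j}}\ \min_{\substack{y\in S\\ \|y\|=1}}\langle y,Hy\rangle
 =\min_{\substack{T\subseteq\mathbb C^n\\ \dim T=n-j+1}}\ \max_{\substack{y\in T\\ \|y\|=1}}\langle y,Hy\rangle ,
\]
and the analogous formulas for the Hermitian matrix $V^*HV\in\M_k$ with $n$ replaced by $k$.

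For the upper bound, fix $1\le i\le k$ and take an $i$-dimensional subspace $S\subseteq\mathbb C^k$ attaining the max-min for $\lambda_i^\downarrow(V^*HV)$. For a unit vector $x\in S$, the vector $y=Vx$ is a unit vector, since $V^*V=I_k$. It satisfies $\langle x,V^*HVx\rangle=\langle y,Hy\rangle$. Moreover, $VS$ is an $i$-dimensional subspace of $\mathbb C^n$ because $V$ is injective. Hence
\[
\lambda_i^\downarrow(V^*HV)=\min_{y\in VS,\ \|y\|=1}\langle y,Hy\rangle\le\lambda_i^\downarrow(H),
\]
by the max-min formula for $H$.

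For the lower bound we argue symmetrically with the min-max form. Take a $(k-i+1)$-dimensional subspace $T\subseteq\mathbb C^k$ attaining the min-max for $\lambda_i^\downarrow(V^*HV)$. Then $VT\subseteq\mathbb C^n$ has dimension $k-i+1=n-j+1$ with $j:=n-k+i$. Therefore
\[
\lambda_i^\downarrow(V^*HV)=\max_{y\in VT,\ \|y\|=1}\langle y,Hy\rangle\ge\lambda_{n-k+i}^\downarrow(H).
\]
Alternatively, the lower bound follows from the upper bound applied to $-H$, using $\lambda_i^\downarrow(-M)=-\lambda_{\ell-i+1}^\downarrow(M)$ for $M\in\M_\ell$.

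There is no real obstacle here. The only point requiring care is the index bookkeeping in the lower bound, namely checking that dimension $k-i+1$ corresponds to the index $n-k+i$ for $H$. One should also note explicitly that injectivity of $V$ preserves subspace dimensions.
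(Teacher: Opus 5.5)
Your proof is correct: pushing the optimal Courant--Fischer subspaces $S$ and $T$ forward by the injective, norm-preserving map $V$ gives both bounds. Your index check $n-(n-k+i)+1=k-i+1$ and the alternative derivation via $-H$ are also right. The paper gives no proof of its own here and simply cites Bhatia; your min-max argument is the standard textbook derivation of this fact, so there is nothing further to compare.
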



\section{A determinant inequality for positive power means}
\label{sec:determinant}

Recall the fixed-point equation \eqref{eq:positive-fixed-point} for positive power means.  We call a positive definite matrix $X$
a \emph{supersolution} of this equation if
\begin{equation}\label{eq:power-supersolution}
X\ge\sum_{j=1}^m w_j(X\#_tB_j)
\end{equation}
in the Loewner order. The following rephrasing (normalisation) of this inequality will be useful.

\begin{lemma}[Supersolution inequality]
\label{lem:normalized-supersolution}
Let $\mathbf w=(w_i)_{i=1}^m$ be a positive probability vector, let
$\mathbf B=(B_j)_{j=1}^m$ be an $m$-tuple of positive definite matrices,
and let $t>0$.  Suppose that
$X\ge\sum_{j=1}^m w_j(X\#_tB_j)$, and put
$D_j:=X^{-1/2}B_jX^{-1/2}$.  Then
\begin{equation} \label{eq:normalized-supersolution} 
  \sum_{j=1}^m w_jD_j^t\le I, \qquad \textnormal{and, consequently}, \qquad 
  n\ge\sum_{j=1}^m w_j\Tr D_j^t.
\end{equation}
\end{lemma}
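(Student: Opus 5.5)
The plan is a congruence normalization. By definition of the weighted geometric mean,
\[
X\#_tB_j = X^{1/2}\bigl(X^{-1/2}B_jX^{-1/2}\bigr)^tX^{1/2} = X^{1/2}D_j^tX^{1/2}.
\]
Substituting this into the supersolution hypothesis gives
\[
X \ge X^{1/2}\Bigl(\sum_{j=1}^m w_jD_j^t\Bigr)X^{1/2}.
\]
Since $X$ is positive definite, $X^{-1/2}$ exists and is Hermitian. The congruence $Z\mapsto X^{-1/2}ZX^{-1/2}$ preserves the Loewner order: if $Z_1\le Z_2$, then for every vector $v$ we have $v^*X^{-1/2}(Z_2-Z_1)X^{-1/2}v=u^*(Z_2-Z_1)u\ge0$ with $u=X^{-1/2}v$. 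Applying this congruence to both sides yields $I\ge\sum_j w_jD_j^t$, which is the first claim.

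For the trace consequence, note that the trace is monotone on Hermitian matrices: $Z\le I$ implies $\Tr(I-Z)\ge0$, because $I-Z$ is positive semidefinite and so has nonnegative trace. The trace is also linear. Taking traces in $\sum_j w_jD_j^t\le I$ therefore gives
\[
\sum_j w_j\Tr D_j^t\le\Tr I=n.
\]

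Two small points should be checked. First, $D_j$ is positive definite, so the power $D_j^t$ is well defined for any real $t>0$; the argument never uses $t\le1$. Second, the identity $X\#_tB_j=X^{1/2}D_j^tX^{1/2}$ is exactly the definition of the weighted geometric mean with $A=X$ and $B=B_j$. There is no real obstacle here; the only point requiring care is justifying that congruence preserves the Loewner order, which is the one-line vector computation above.
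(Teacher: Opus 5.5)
Your proof is correct and follows the paper's argument. You write $X\#_tB_j=X^{1/2}D_j^tX^{1/2}$ directly from the definition, apply the order-preserving congruence by $X^{-1/2}$ to the supersolution inequality to get $\sum_j w_jD_j^t\le I$, and take traces (your observation that $t\le1$ is never used is accurate, consistent with the lemma's hypothesis $t>0$).
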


\begin{proof}
By the definition of the weighted geometric mean and the transformer identity,
\begin{equation}
	X^{-1/2}(X\#_tB_j)X^{-1/2}=D_j^t \,.
\end{equation}
Congruence by $X^{-1/2}$ preserves the Loewner order and sends $X$ to
$I$.  Applying it to the assumed supersolution inequality and taking the trace therefore
gives \eqref{eq:normalized-supersolution}.
\end{proof}

For an exact fixed point, the calculation gives equality in
\eqref{eq:normalized-supersolution}; in particular, it recovers the
discrete specialization of Hiai--Lim \cite[Equation (5.4)]{HiaiLim}.  Only the
one-sided form is needed here.  The next lemma converts the resulting
trace term into an eigenvalue-paired scalar bound.

\begin{lemma}[Trace consequence of multiplicative Lidskii]
\label{lem:lidskii-trace}
Let $A,B\in\M_n$ be positive definite, let $t>0$, and write
$a_i:=\lambda_i^\downarrow(A)$ and $b_i:=\lambda_i^\downarrow(B)$.  Then
\begin{equation}\label{eq:lidskii-trace}
  \Tr\bigl(B^{-1/2}AB^{-1/2}\bigr)^t
  \ge\sum_{i=1}^n\left(\frac{a_i}{b_i}\right)^t.
\end{equation}
\end{lemma}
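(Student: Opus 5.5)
The plan is to derive \eqref{eq:lidskii-trace} from the lower multiplicative Lidskii inequality of Fact~\ref{fact:multiplicative-lidskii}, combined with Karamata's inequality applied to the convex function $x\mapsto e^{tx}$.

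Put $C:=B^{-1/2}AB^{-1/2}$, which is positive definite, and write $c_i:=\lambda_i^\downarrow(C)$. Define vectors $x,y\in\mathbb R^n$ by $x_i:=\log a_i-\log b_i$ and $y_i:=\log c_i$. Fact~\ref{fact:multiplicative-lidskii} gives the majorization $x\prec y$. This includes equality of the full sums, since both sides equal $\log\det A-\log\det B$. It is genuine majorization, not merely weak majorization, so Karamata's inequality applies to every convex function, with no monotonicity requirement.

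Take $f(s):=e^{ts}$. This function is convex for every real $t$, and in particular for $t>0$ it is also increasing, although we do not need that. Karamata's inequality (equivalently, the Hardy--Littlewood--P\'olya characterization of majorization) then gives $\sum_i f(x_i)\le\sum_i f(y_i)$. This sum is invariant under rearrangement, so it does not matter that $x$ must be reordered decreasingly. Evaluating both sides, $\sum_i f(x_i)=\sum_i (a_i/b_i)^t$, while $\sum_i f(y_i)=\sum_i c_i^t=\Tr C^t$ by the spectral theorem. This is exactly \eqref{eq:lidskii-trace}.

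There is no real obstacle here. The only point to check is that Fact~\ref{fact:multiplicative-lidskii} is stated in the direction with the paired vector $x$ on the majorized side, which it is. Note also that for $t>0$ only weak majorization $x\prec_w y$ would be needed, because $f$ is increasing; so even if one only trusted the partial-sum inequalities, the argument would go through.
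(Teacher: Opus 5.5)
Your proposal is correct and matches the paper's proof: Fact~\ref{fact:multiplicative-lidskii} gives the majorization of the paired log-ratios $\log a_i-\log b_i$ by the log-eigenvalues of $B^{-1/2}AB^{-1/2}$, and Karamata's inequality for the convex function $s\mapsto e^{ts}$, together with the rearrangement invariance of the sum, yields the trace bound. Your side remark is also correct: since $e^{ts}$ is increasing for $t>0$, weak majorization alone would suffice.
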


\begin{proof}
Put $D:=B^{-1/2}AB^{-1/2}$ and define $u=(u_i)_{i=1}^n$ and
$v=(v_i)_{i=1}^n$ by $u_i:=\log a_i-\log b_i$ and
$v_i:=\log\lambda_i^\downarrow(D)$.
Fact~\ref{fact:multiplicative-lidskii} says that
$u^\downarrow\prec v$, where $u^\downarrow$ is the decreasing
rearrangement of $u$.  Recall Karamata's inequality: if
$x,y\in\mathbb R^n$ satisfy $x\prec y$ and $f$ is convex on an interval
containing their entries, then
$\sum_{i=1}^nf(x_i)\le\sum_{i=1}^nf(y_i)$; see Bhatia
\cite[Theorem~II.3.1]{Bhatia}.  Since $x\mapsto e^{tx}$ is convex for
$t>0$, Karamata applied to $u^\downarrow\prec v$ gives
\begin{align}
  \sum_{i=1}^n\left(\frac{a_i}{b_i}\right)^t
  &=\sum_{i=1}^n e^{tu_i}
   =\sum_{i=1}^n e^{tu_i^\downarrow}\\
  &\le\sum_{i=1}^n e^{tv_i}
   =\sum_{i=1}^n\bigl[\lambda_i^\downarrow(D)\bigr]^t
   =\Tr D^t.\label{eq:lidskii-trace-calculation}
\end{align}
The middle equality in the first line holds because rearranging the
entries of $u$ does not change their exponential sum.
\end{proof}

Combining this trace estimate with the normalized supersolution
inequality gives the determinant bound that drives the rest of the proof.

\begin{proposition}[Eigenvalue-paired determinant bound for supersolutions]
\label{prop:determinant}
Let $\mathbf w=(w_i)_{i=1}^m$ be a positive probability vector, let
$\mathbf B=(B_j)_{j=1}^m$ be an $m$-tuple of positive definite matrices
in $\M_n$, let $t>0$, and suppose that $X>0$ is a supersolution in the
sense of \eqref{eq:power-supersolution}.
If $b_{j,1}\ge\cdots\ge b_{j,n}$ are the eigenvalues of $B_j$ and
$\mathbf b_i:=(b_{j,i})_{j=1}^m$, then
\begin{equation}\label{eq:determinant-bound}
  \det X
  \ge
  \prod_{i=1}^n p_t(\mathbf w;\mathbf b_i).
\end{equation}
\end{proposition}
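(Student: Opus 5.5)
Here is the plan. Combine Lemma~\ref{lem:normalized-supersolution} with Lemma~\ref{lem:lidskii-trace} applied to $A=B_j$ and $B=X$, then optimize the resulting scalar inequality via a weighted AM--GM argument.

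Write $x_1\ge\cdots\ge x_n>0$ for the eigenvalues of $X$. For each $j$, the matrix $D_j=X^{-1/2}B_jX^{-1/2}$ is exactly the relative matrix of Lemma~\ref{lem:lidskii-trace} with $A=B_j$, $B=X$. That lemma therefore gives
\[
  \Tr D_j^t\ge\sum_{i=1}^n\left(\frac{b_{j,i}}{x_i}\right)^t .
\]
Summing with weights $w_j$ and using the trace form of \eqref{eq:normalized-supersolution} yields the scalar inequality
\[
  n\ge\sum_{i=1}^n x_i^{-t}\sum_{j=1}^m w_jb_{j,i}^t
  =\sum_{i=1}^n\left(\frac{p_t(\mathbf w;\mathbf b_i)}{x_i}\right)^t .
\]

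Set $c_i:=(p_t(\mathbf w;\mathbf b_i)/x_i)^t>0$. By the AM--GM inequality,
\[
  \Bigl(\prod_i c_i\Bigr)^{1/n}\le\frac1n\sum_i c_i\le1 .
\]
Hence $\prod_i c_i\le1$, that is,
\[
  \prod_i p_t(\mathbf w;\mathbf b_i)^t\le\prod_i x_i^t=(\det X)^t .
\]
Since $t>0$, taking $t$-th roots gives \eqref{eq:determinant-bound}.

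The only delicate point is checking that Lemma~\ref{lem:lidskii-trace} applies with the correct roles. The eigenvalues of $B_j$ and those of $X$ must both be ordered decreasingly and paired index by index. Fact~\ref{fact:multiplicative-lidskii} provides exactly this pairing, so I expect no real obstacle. The rest is the AM--GM step, which is routine.
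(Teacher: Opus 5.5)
Your proposal is correct and matches the paper's proof step for step: the trace form of Lemma~\ref{lem:normalized-supersolution}, then Lemma~\ref{lem:lidskii-trace} with $A=B_j$ and $B=X$, then interchanging the finite sums, then unweighted AM--GM followed by $t$-th roots. Your opening mention of a \emph{weighted} AM--GM is only a wording slip, since the argument you actually give uses the plain version.
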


\begin{proof}
Write $x_1\ge\cdots\ge x_n$ for the eigenvalues of $X$, and set
$D_j:=X^{-1/2}B_jX^{-1/2}$.

We first use the supersolution condition.  Lemma~\ref{lem:normalized-supersolution},
applied to $X$ and the matrices $B_j$, gives
$n\ge\sum_jw_j\Tr D_j^t$.  Next fix $j$.  The decreasingly ordered
eigenvalues of $B_j$ and $X$ are respectively $b_{j,i}$ and $x_i$, so
Lemma~\ref{lem:lidskii-trace}, with $A=B_j$ and $B=X$, yields
$\Tr D_j^t\ge\sum_{i=1}^n(b_{j,i}/x_i)^t$.  Multiplying these
inequalities by $w_j$ and summing over $j$ therefore gives
\begin{align}
 n
 &\ge\sum_{j=1}^m w_j\Tr D_j^t
 \ge\sum_{j=1}^m w_j\sum_{i=1}^n
     \left(\frac{b_{j,i}}{x_i}\right)^t
     \label{eq:aggregated-trace-bound}\\
 &=\sum_{i=1}^n\frac{\sum_jw_jb_{j,i}^t}{x_i^t}
 =\sum_{i=1}^n
   \left(\frac{p_t(\mathbf w;\mathbf b_i)}{x_i}\right)^t.
   \label{eq:aggregated-trace-bound-final}
\end{align}
Here we interchanged the two finite sums and then used
$p_t(\mathbf w;\mathbf b_i)^t=\sum_jw_jb_{j,i}^t$.

The chain \eqref{eq:aggregated-trace-bound}--\eqref{eq:aggregated-trace-bound-final},
after division by $n$, says that
the arithmetic mean of the positive numbers
$\bigl(p_t(\mathbf w;\mathbf b_i)/x_i\bigr)^t$ is at most $1$.
The scalar arithmetic--geometric mean inequality therefore yields
\begin{equation}
1\ge \left( \prod_{i=1}^n
(p_t(\mathbf w;\mathbf b_i)/x_i)^t \right)^{1/n} \,.
\end{equation}
Since $t>0$, this is equivalent to
$\prod_i p_t(\mathbf w;\mathbf b_i)\le\prod_i x_i=\det X$, which is
\eqref{eq:determinant-bound}.
\end{proof}

\section{Lower spectral products}
\label{sec:lower-products}

We now lift the determinant inequality to every lower partial product.
The localization device is the shorted matrix.  Let
$V:\mathbb C^k\to\mathbb C^n$ be an isometry and, for $A>0$, write
$\Sshort_V(A):=(V^*A^{-1}V)^{-1}$.
This is the matrix of the shorted operator on the range of $V$.  If
$[V\ W]$ is unitary and
\begin{equation}
[V\ W]^*A[V\ W]
=\left[\begin{smallmatrix}A_{11}&A_{12}\\A_{21}&A_{22}\end{smallmatrix}\right],
\end{equation}
then the block inversion formula gives
\begin{equation}\label{eq:schur-complement}
  \Sshort_V(A)=A_{11}-A_{12}A_{22}^{-1}A_{21}.
\end{equation}
Shorted operators were developed systematically by Anderson and Trapp
\cite{AndersonTrapp}.  We need three elementary order and spectral
properties of this operation.  The first says that shorting a weighted sum
dominates the corresponding weighted sum of the shorted matrices.

\begin{lemma}[Weighted superadditivity]
\label{lem:short-superadditive}
If $\mathbf A=(A_j)_{j=1}^m$ is an $m$-tuple of positive definite
matrices and $\mathbf w=(w_i)_{i=1}^m$ is a positive vector, then
\begin{equation}\label{eq:short-superadditive}
  \Sshort_V\left(\sum_{j=1}^m w_jA_j\right)
  \ge\sum_{j=1}^m w_j\Sshort_V(A_j).
\end{equation}
\end{lemma}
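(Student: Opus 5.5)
The plan is to reduce the claim to concavity of the map $A\mapsto\Sshort_V(A)=(V^*A^{-1}V)^{-1}$ on positive definite matrices, and to obtain that concavity from a variational formula for $\Sshort_V$.

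First, fix $[V\ W]$ unitary and use the Schur complement form \eqref{eq:schur-complement}. For any $A>0$ and any $x\in\mathbb C^k$,
\[
 \langle x,\Sshort_V(A)x\rangle=\min_{y\in\mathbb C^{n-k}}
 \Bigl\langle \bigl[\begin{smallmatrix}x\\y\end{smallmatrix}\bigr],
 [V\ W]^*A[V\ W]\bigl[\begin{smallmatrix}x\\y\end{smallmatrix}\bigr]\Bigr\rangle
 =\min_{y}\,\langle Vx+Wy,\,A(Vx+Wy)\rangle .
\]
This is the standard completion-of-squares identity. The quadratic form in $y$ has positive definite leading block $A_{22}$. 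Its minimum is attained at $y=-A_{22}^{-1}A_{21}x$, and the minimal value is $\langle x,(A_{11}-A_{12}A_{22}^{-1}A_{21})x\rangle$. Verifying this is the only step needing any computation, and it is routine.

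Next, apply the formula to $A=\sum_j w_jA_j$. Fix $x$ and let $y_*$ be the minimizer for this sum. Then
\[
 \langle x,\Sshort_V(A)x\rangle=\sum_j w_j\langle Vx+Wy_*,A_j(Vx+Wy_*)\rangle
 \ge\sum_j w_j\min_{y}\langle Vx+Wy,A_j(Vx+Wy)\rangle
 =\sum_j w_j\langle x,\Sshort_V(A_j)x\rangle .
\]
The inequality uses $w_j>0$ and the fact that a single common $y_*$ is a feasible choice for each $j$. Since $x$ is arbitrary, this gives the Loewner inequality \eqref{eq:short-superadditive}.

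There is no real obstacle here; the main point is to state the variational formula correctly. As an alternative, one could use operator convexity of the matrix inverse together with the observation that $Z\mapsto (V^*ZV)$ is linear, but the direct minimization argument above is the cleanest. Note that the argument does not require $\sum_j w_j=1$, matching the hypothesis that $\mathbf w$ is only a positive vector.
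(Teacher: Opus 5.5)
Your proof is correct, but it uses a different characterization of the shorted matrix than the paper does. You work with the \emph{variational} description $\langle x,\Sshort_V(A)x\rangle=\min_y\langle Vx+Wy,A(Vx+Wy)\rangle$, obtained from \eqref{eq:schur-complement} by completing the square. Superadditivity then reduces to the observation that the minimizer for $\sum_jw_jA_j$ is a feasible competitor for each $A_j$.

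The paper instead uses the \emph{extremal} (Anderson--Trapp) description. From the two Schur complements of $\left[\begin{smallmatrix}E&VD\\DV^*&D\end{smallmatrix}\right]\ge0$ it derives $E-VDV^*\ge0\iff D\le\Sshort_V(E)$, i.e.\ $\Sshort_V(E)$ is the largest $D$ with $VDV^*\le E$. It then sums $A_j-V\Sshort_V(A_j)V^*\ge0$ with weights and applies this equivalence to $E=\sum_jw_jA_j$ and $D=\sum_jw_j\Sshort_V(A_j)$.

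The two arguments are dual: a common minimizer is feasible for every summand, versus a weighted sum of feasible $D$'s is feasible for the weighted sum. Yours is more elementary and works vector by vector, at the price of choosing a unitary completion $[V\ W]$ and computing the minimizer explicitly. The paper's stays purely at the level of Loewner inequalities and needs neither $W$ nor $A_{22}^{-1}$. You could also avoid $W$ by minimizing $\langle z,Az\rangle$ over $\{z:V^*z=x\}$, which gives the same value.

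One caution about your closing aside: operator convexity of the inverse plus linearity of $Z\mapsto V^*ZV$ does not by itself give the lemma. For $\sum_jw_j=1$ that chain yields only
$\Sshort_V\bigl(\sum_jw_jA_j\bigr)\ge\bigl(\sum_jw_j\Sshort_V(A_j)^{-1}\bigr)^{-1}$.
The right-hand side is the weighted harmonic mean of the $\Sshort_V(A_j)$, which lies \emph{below} $\sum_jw_j\Sshort_V(A_j)$, so the two inequalities do not combine. Your main argument does not depend on this remark, so there is no gap, but the remark should be dropped.
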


\begin{proof}
For positive definite matrices $E$ and $D$, the Schur-complement
criterion and order reversal under inversion give
\begin{equation}\label{eq:short-maximality-equivalence}
  E-VDV^*\ge0
  \quad\Longleftrightarrow\quad
  D^{-1}\ge V^*E^{-1}V
  \quad\Longleftrightarrow\quad
  D\le\Sshort_V(E).
\end{equation}
The first equivalence follows from the two Schur complements of the positive
block matrix
$\left[\begin{smallmatrix}E&VD\\DV^*&D\end{smallmatrix}\right]\ge0$.
Taking $E=A_j$ and $D=\Sshort_V(A_j)$ shows that
\begin{equation}
  A_j-V\Sshort_V(A_j)V^*\ge0.
\end{equation}
Now set
\begin{equation}
  A:=\sum_{j=1}^m w_jA_j,
  \qquad
  C:=\sum_{j=1}^m w_j\Sshort_V(A_j).
\end{equation}
Summing the preceding inequalities with weights $w_j$ gives
\begin{equation}\label{eq:superadditivity-residual}
  A-VCV^*
  =\sum_{j=1}^m w_j
    \bigl(A_j-V\Sshort_V(A_j)V^*\bigr)
  \ge0.
\end{equation}
Applying \eqref{eq:short-maximality-equivalence} to $E=A$ and $D=C$
now gives \eqref{eq:short-superadditive}.  This is the finite weighted
form of Anderson--Trapp's superadditivity theorem
\cite[Theorem~4]{AndersonTrapp}.
\end{proof}

The second property concerns the weighted geometric mean: shorting
before taking the mean can only decrease the result.  This is the
direction needed when the fixed-point equation is shorted later.

\begin{lemma}[Shorting and weighted geometric means]
\label{lem:short-geometric}
For $A,B>0$ and $0\le t\le1$,
\begin{equation}\label{eq:short-geometric}
  \Sshort_V(A\#_tB)
  \ge\Sshort_V(A)\#_t\Sshort_V(B).
\end{equation}
\end{lemma}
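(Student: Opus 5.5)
The plan is to reduce the inequality to Ando's compression inequality (Fact~\ref{fact:ando-compression}), applied to the inverses, and then invert.

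\emph{Step 1: inversion identity for the geometric mean.} For positive definite $A,B$ and $0\le t\le1$,
\[
(A\#_tB)^{-1}=A^{-1}\#_tB^{-1}.
\]
This follows from the formula $A\#_tB=A^{1/2}(A^{-1/2}BA^{-1/2})^tA^{1/2}$. Inverting gives $A^{-1/2}(A^{1/2}B^{-1}A^{1/2})^{t}A^{-1/2}$, which is exactly $A^{-1}\#_tB^{-1}$ once we write $C=A^{-1}$, so that $C^{-1/2}B^{-1}C^{-1/2}=A^{1/2}B^{-1}A^{1/2}$.

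\emph{Step 2: apply Ando's inequality to the inverses.} Fact~\ref{fact:ando-compression} with $A^{-1},B^{-1}$ and the isometry $V$ gives
\[
V^*(A\#_tB)^{-1}V=V^*(A^{-1}\#_tB^{-1})V\le (V^*A^{-1}V)\#_t(V^*B^{-1}V).
\]
Both sides are positive definite, because $V^*ZV>0$ whenever $Z>0$ and $V$ is an isometry. We may therefore invert, which reverses the Loewner order:
\[
\bigl(V^*(A\#_tB)^{-1}V\bigr)^{-1}\ge \bigl[(V^*A^{-1}V)\#_t(V^*B^{-1}V)\bigr]^{-1}.
\]

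\emph{Step 3: rewrite in terms of shorted matrices.} The left side is $\Sshort_V(A\#_tB)$ by definition. Applying Step~1 again, now in $\M_k$, the right side equals $(V^*A^{-1}V)^{-1}\#_t(V^*B^{-1}V)^{-1}=\Sshort_V(A)\#_t\Sshort_V(B)$. This gives \eqref{eq:short-geometric}.

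There is no real obstacle. The only points to check are the inversion identity in Step~1 and that every matrix being inverted is positive definite, which the isometry property guarantees. The endpoint cases $t=0$ and $t=1$ are trivial: equality holds at $t=0$ and at $t=1$.
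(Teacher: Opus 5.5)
Your proposal is correct and matches the paper's proof: both use self-duality $(A\#_tB)^{-1}=A^{-1}\#_tB^{-1}$, apply Ando's compression inequality (Fact~\ref{fact:ando-compression}) to $A^{-1},B^{-1}$, invert to reverse the Loewner order, and apply self-duality again in $\M_k$ to recognize $\Sshort_V(A)\#_t\Sshort_V(B)$. Your verification of the inversion identity and of positive definiteness of the matrices being inverted just makes explicit what the paper's one-line chain leaves implicit.
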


\begin{proof}
Self-duality, $(A\#_tB)^{-1}=A^{-1}\#_tB^{-1}$, order reversal under
inversion, and Fact~\ref{fact:ando-compression} give
\begin{align}
  \Sshort_V(A\#_tB)
  &=\bigl[V^*(A^{-1}\#_tB^{-1})V\bigr]^{-1}
    \label{eq:short-geometric-calculation}\\
  &\ge
  \bigl[(V^*A^{-1}V)\#_t(V^*B^{-1}V)\bigr]^{-1}\\
  &=\Sshort_V(A)\#_t\Sshort_V(B).
    \refstepcounter{equation}
    \tag*{\numberedqedstack{\theequation}}
\end{align}
\renewcommand{\qedsymbol}{}
\end{proof}

The third property controls the spectrum.  It says that a
$k$-dimensional shorted matrix cannot have smaller bottom eigenvalues
than the original matrix.

\begin{lemma}[Eigenvalues of a shorted matrix]
\label{lem:short-spectrum}
Suppose that $0<\beta_1\le\cdots\le\beta_n$ are the eigenvalues of
$B>0$ and $0<s_1\le\cdots\le s_k$ those of $\Sshort_V(B)$.  Then
\begin{equation}\label{eq:short-interlace}
  s_i\ge\beta_i,\qquad 1\le i\le k.
\end{equation}
\end{lemma}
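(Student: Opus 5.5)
The plan is to pass to inverses and use Cauchy's interlacing theorem (Fact~\ref{fact:cauchy-interlacing}). By definition $\Sshort_V(B)=(V^*B^{-1}V)^{-1}$, so the eigenvalues of $\Sshort_V(B)$ are the reciprocals of the eigenvalues of the compression $V^*B^{-1}V$. The eigenvalues of $B^{-1}$, in decreasing order, are $\lambda_i^\downarrow(B^{-1})=1/\beta_i$, since $\beta_1$ is the smallest eigenvalue of $B$ and therefore gives the largest eigenvalue of $B^{-1}$. Similarly, $\lambda_i^\downarrow(V^*B^{-1}V)=1/s_i$, because the $s_i$ are listed increasingly.

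Next, apply the left inequality of \eqref{eq:cauchy-interlacing} with $H=B^{-1}$ and the isometry $V:\mathbb C^k\to\mathbb C^n$. This gives, for $1\le i\le k$,
\begin{equation*}
  \frac1{s_i}=\lambda_i^\downarrow(V^*B^{-1}V)\le\lambda_i^\downarrow(B^{-1})=\frac1{\beta_i}.
\end{equation*}
All quantities are positive: $V^*B^{-1}V>0$ because $V$ is injective and $B^{-1}>0$. Taking reciprocals therefore reverses the inequality and yields $s_i\ge\beta_i$, which is \eqref{eq:short-interlace}.

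No real obstacle is expected. The only step needing care is the index bookkeeping: the lemma lists eigenvalues increasingly, whereas Fact~\ref{fact:cauchy-interlacing} is stated for decreasing order. Inversion exchanges these two orderings, and this is exactly what makes the upper interlacing bound for $B^{-1}$ become the lower bound for $\Sshort_V(B)$.
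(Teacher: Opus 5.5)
Your proposal is correct and matches the paper's proof: apply the upper bound of Cauchy interlacing (Fact~\ref{fact:cauchy-interlacing}) to the compression $V^*B^{-1}V$, then take reciprocals. Your index bookkeeping, with inversion exchanging the increasing and decreasing orderings, is exactly what the paper uses.
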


\begin{proof}
Apply Fact~\ref{fact:cauchy-interlacing} to $V^*B^{-1}V$.  If
$\mu_1\ge\cdots\ge\mu_k$ are the eigenvalues of that
compression, then $\mu_i\le\lambda_i^\downarrow(B^{-1})=\beta_i^{-1}$
and $s_i=\mu_i^{-1}$, which proves \eqref{eq:short-interlace}.
\end{proof}

When the range of $V$ is itself a bottom spectral subspace, shorting
agrees with ordinary compression.  We use this simple special case
directly in the proof of the next theorem.

\section{Proof of the main theorem}
\label{sec:main-proof}

The results of the preceding section first give the positive-order
lower-product estimate.

\begin{theorem}[Lower weak log-majorization for positive power means]
\label{thm:positive-lower-log}
Let $\mathbf w=(w_i)_{i=1}^m$ be a positive probability vector, let
$\mathbf B=(B_j)_{j=1}^m$ be an $m$-tuple of positive definite matrices
in $\M_n$, and let $0<t\le1$.
Write $\beta_{j,1}\le\cdots\le\beta_{j,n}$ for the eigenvalues of
$B_j$ in increasing order, and put
$\boldsymbol\beta_i:=(\beta_{j,i})_{j=1}^m$.
Then, for every $1\le k\le n$,
\begin{equation}\label{eq:lower-products}
 \prod_{i=1}^k
 \lambda_i^{\uparrow}\bigl(P_t(\mathbf w;\mathbf B)\bigr)
 \ge
 \prod_{i=1}^k p_t(\mathbf w;\boldsymbol\beta_i).
\end{equation}
\end{theorem}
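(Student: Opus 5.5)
Let $X:=P_t(\mathbf w;\mathbf B)$ and fix $1\le k\le n$. The plan is to localize to the bottom spectral subspace of $X$ and then apply Proposition~\ref{prop:determinant} in dimension $k$. Choose an isometry $V:\mathbb C^k\to\mathbb C^n$ whose range is spanned by eigenvectors of $X$ for its $k$ smallest eigenvalues. Since the range of $V$ reduces $X$, the off-diagonal blocks in \eqref{eq:schur-complement} vanish. Hence $\Sshort_V(X)=V^*XV$, whose eigenvalues are exactly $\lambda_1^\uparrow(X),\dots,\lambda_k^\uparrow(X)$. In particular,
\[
\det\Sshort_V(X)=\prod_{i=1}^k\lambda_i^\uparrow(X).
\]

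Next we show that $Y:=\Sshort_V(X)$ is a supersolution, in $\M_k$, of the fixed-point equation with data $C_j:=\Sshort_V(B_j)$. Shorting is monotone in the Loewner order, since it is the inverse of a compression of the inverse. Applying it to the fixed-point identity $X=\sum_jw_j(X\#_tB_j)$ and then using Lemma~\ref{lem:short-superadditive} and Lemma~\ref{lem:short-geometric} (valid since $0<t\le1$) gives
\[
Y=\Sshort_V\Bigl(\sum_j w_j(X\#_tB_j)\Bigr)\ge\sum_jw_j\Sshort_V(X\#_tB_j)\ge\sum_jw_j\bigl(Y\#_t C_j\bigr).
\]
So $Y>0$ satisfies \eqref{eq:power-supersolution} with the tuple $(C_j)_j$. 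Proposition~\ref{prop:determinant} then yields
\[
\prod_{i=1}^k\lambda_i^\uparrow(X)=\det Y\ge\prod_{i=1}^k p_t(\mathbf w;\mathbf c_i),
\]
where $\mathbf c_i=(c_{j,i})_j$ collects the $i$-th largest eigenvalues of the $C_j$.

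It remains to compare this with the $\boldsymbol\beta_i$. The product over all $i=1,\dots,k$ of decreasingly ordered values equals the product of the same values listed increasingly. So we re-index $\prod_{i=1}^k p_t(\mathbf w;\mathbf c_i)$ using the increasing eigenvalues $s_{j,1}\le\cdots\le s_{j,k}$ of $C_j$, pairing the $i$-th smallest eigenvalues across $j$. This re-indexing is legitimate: reversing the index $i\mapsto k+1-i$ simultaneously for all $j$ turns the tuple of $i$-th largest values into the tuple of $(k+1-i)$-th smallest values, and the product over $i$ is unchanged. Lemma~\ref{lem:short-spectrum} gives $s_{j,i}\ge\beta_{j,i}$ for each $j$ and each $i\le k$. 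Since $p_t(\mathbf w;\cdot)$ is increasing in each entry for $t>0$, we get $p_t(\mathbf w;\mathbf s_i)\ge p_t(\mathbf w;\boldsymbol\beta_i)$, and multiplying over $i$ gives \eqref{eq:lower-products}.

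The main obstacle is the supersolution step. It requires checking that monotonicity of $\Sshort_V$, superadditivity, and Lemma~\ref{lem:short-geometric} chain in the correct direction. It also requires noting that shorting to a reducing subspace of $X$ is just compression, which is what makes $\det Y$ equal the lower partial product. The index bookkeeping in the last step, between the decreasing pairing of Proposition~\ref{prop:determinant} and the increasing pairing of the statement, needs care but is routine once it is seen to be a simultaneous reversal for all $j$.
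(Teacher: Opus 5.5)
Your proposal is correct and follows essentially the same route as the paper: you localize to the bottom spectral subspace of $P_t(\mathbf w;\mathbf B)$, where shorting coincides with compression, and use Lemmas~\ref{lem:short-superadditive} and~\ref{lem:short-geometric} to make the shorted mean a $k$-dimensional supersolution for $C_j=\Sshort_V(B_j)$. You then apply Proposition~\ref{prop:determinant}, reverse the eigenvalue indexing, and finish with Lemma~\ref{lem:short-spectrum} and monotonicity of $p_t$; the only superfluous remark is the appeal to monotonicity of $\Sshort_V$ in the first step, which is unnecessary because you short both sides of an identity.
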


\begin{proof}
Fix $k$.  Let $V:\mathbb C^k\to\mathbb C^n$ be an isometry whose
range is spanned by eigenvectors of $P_t(\mathbf w;\mathbf B)$
corresponding to its $k$ smallest eigenvalues, and set
$X_k:=\Sshort_V\bigl(P_t(\mathbf w;\mathbf B)\bigr)$.
The range of $V$ reduces both $P_t(\mathbf w;\mathbf B)$ and its
inverse.  Thus shorting agrees with compression on this subspace, and
the eigenvalues of the compression are precisely the $k$ smallest
eigenvalues of the power mean.  Consequently,
\begin{equation}\label{eq:positive-mean-bottom-product}
  X_k=V^*P_t(\mathbf w;\mathbf B)V,
  \qquad
  \det X_k
  =\prod_{i=1}^k
   \lambda_i^\uparrow\bigl(P_t(\mathbf w;\mathbf B)\bigr).
\end{equation}

For each $j$, put $C_j:=\Sshort_V(B_j)>0$, and write
$\mathbf C:=(C_j)_{j=1}^m$.
Apply Lemma~\ref{lem:short-superadditive} and
Lemma~\ref{lem:short-geometric} to the fixed-point equation for
$P_t(\mathbf w;\mathbf B)$:
\begin{align}
 X_k
 &=\Sshort_V\left(\sum_jw_j
   \bigl(P_t(\mathbf w;\mathbf B)\#_tB_j\bigr)\right)
   \label{eq:shorted-supersolution}\\
 &\ge\sum_jw_j\Sshort_V
   \bigl(P_t(\mathbf w;\mathbf B)\#_tB_j\bigr)\\
 &\ge\sum_jw_j\left(
   \Sshort_V\bigl(P_t(\mathbf w;\mathbf B)\bigr)
   \#_t\Sshort_V(B_j)\right)\\
 &=\sum_jw_j(X_k\#_tC_j).
\end{align}
Thus $X_k$ is a supersolution for the $k$-dimensional power-mean
equation with inputs $\mathbf C$.

Let $s_{j,1}\le\cdots\le s_{j,k}$ be the eigenvalues of $C_j$.
Put $\mathbf s_i:=(s_{j,i})_{j=1}^m$.
Lemma~\ref{lem:short-spectrum} gives $s_{j,i}\ge\beta_{j,i}$ for
$1\le i\le k$.
Apply Proposition~\ref{prop:determinant} directly to this supersolution
in dimension $k$.  Reversing every eigenvalue list only permutes the
factors in its conclusion, so this inequality and scalar monotonicity yield
\begin{equation}\label{eq:localized-determinant-bound}
 \prod_{i=1}^k
 \lambda_i^{\uparrow}\bigl(P_t(\mathbf w;\mathbf B)\bigr)
 =\det X_k
 \ge\prod_{i=1}^k p_t(\mathbf w;\mathbf s_i)
 \ge\prod_{i=1}^k p_t(\mathbf w;\boldsymbol\beta_i).\qedhere
\end{equation}
\end{proof}

\begin{mainrestated}
Let $\mathbf w=(w_i)_{i=1}^m$ be a positive probability vector and let
$\mathbf A=(A_j)_{j=1}^m$ be an $m$-tuple of positive definite matrices
in $\M_n$.  For every $-1\le r<0$,
\begin{equation}\label{eq:main-restated}
 \left(\lambda_i^\downarrow\bigl(P_r(\mathbf w;\mathbf A)\bigr)\right)_{i=1}^n
 \prec_{w\log}
 \left(
   p_r\bigl(\mathbf w;\lambda_i^{\downarrow}(\mathbf A)\bigr)
 \right)_{i=1}^n.
\end{equation}
\end{mainrestated}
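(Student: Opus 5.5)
The plan is to deduce the restated theorem from Theorem~\ref{thm:positive-lower-log} by the duality \eqref{eq:negative-duality}. Set $t:=-r\in(0,1]$ and $B_j:=A_j^{-1}$, so that $P_r(\mathbf w;\mathbf A)=P_t(\mathbf w;\mathbf B)^{-1}$. Inversion turns the decreasing eigenvalues of $P_r(\mathbf w;\mathbf A)$ into the increasing eigenvalues of $P_t(\mathbf w;\mathbf B)$:
\[
\lambda_i^\downarrow\bigl(P_r(\mathbf w;\mathbf A)\bigr)=\lambda_i^\uparrow\bigl(P_t(\mathbf w;\mathbf B)\bigr)^{-1}.
\]
Likewise, the increasing eigenvalues of $B_j$ are $\beta_{j,i}=\lambda_i^\downarrow(A_j)^{-1}$. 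The scalar side obeys the same duality, $p_t(\mathbf w;\mathbf a^{-1})=p_r(\mathbf w;\mathbf a)^{-1}$, which follows directly from \eqref{eq:scalar-power-mean}. Therefore
\[
p_t(\mathbf w;\boldsymbol\beta_i)=p_r\bigl(\mathbf w;\lambda_i^\downarrow(\mathbf A)\bigr)^{-1}.
\]

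Next, take reciprocals in \eqref{eq:lower-products}. This gives, for each $1\le k\le n$,
\[
\prod_{i=1}^k\lambda_i^\downarrow\bigl(P_r(\mathbf w;\mathbf A)\bigr)\le\prod_{i=1}^k p_r\bigl(\mathbf w;\lambda_i^\downarrow(\mathbf A)\bigr).
\]
The left-hand side is already the product of the $k$ largest entries of the first vector in \eqref{eq:main-restated}.

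The one remaining point is that $\prec_{w\log}$ uses the decreasing rearrangement of the comparator vector $y_i:=p_r(\mathbf w;\lambda_i^\downarrow(\mathbf A))$. So I must check that $y$ is already decreasing in $i$. This holds because $p_r(\mathbf w;\cdot)$ is monotone increasing in each entry, and $\lambda_i^\downarrow(A_j)$ is decreasing in $i$ for every $j$. Hence $\prod_{i\le k}y_i=\prod_{i\le k}y_i^\downarrow$, and the displayed inequality is exactly \eqref{eq:main-restated}.

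No step here is a real obstacle. The substantive work lies in Theorem~\ref{thm:positive-lower-log}, and this final step only needs careful bookkeeping of the order reversals under inversion, together with the monotonicity check on the comparator. The endpoint $r=-1$ corresponds to $t=1$, which Theorem~\ref{thm:positive-lower-log} covers, so no separate case is needed.
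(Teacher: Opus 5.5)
Your proposal is correct and follows the paper's proof essentially step for step. You apply Theorem~\ref{thm:positive-lower-log} to $\mathbf A^{-1}$ at order $-r$, take reciprocals using the matching matrix and scalar inverse dualities, and then note that the comparator is already decreasing because the scalar power mean is increasing in each argument.
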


\begin{proof}
Write $a_{j,1}:=\lambda_1^{\downarrow}(A_j)\ge\cdots\ge
a_{j,n}:=\lambda_n^{\downarrow}(A_j)$, and put
$\mathbf a_i:=(a_{j,i})_{j=1}^m$.  The increasingly ordered eigenvalues
of $A_j^{-1}$ are $a_{j,i}^{-1}$.  Since $-r\in(0,1]$,
Theorem~\ref{thm:positive-lower-log}, applied to the inputs
$\mathbf A^{-1}$ and the positive order $-r$, gives
\begin{equation}\label{eq:positive-product-for-inverses}
  \prod_{i=1}^k
  \lambda_i^\uparrow\bigl(P_{-r}(\mathbf w;\mathbf A^{-1})\bigr)
  \ge
  \prod_{i=1}^k
  p_{-r}\bigl(\mathbf w;(a_{j,i}^{-1})_{j=1}^m\bigr)
  \qquad (1\le k\le n).
\end{equation}
The matrix and scalar power means both satisfy the same inverse duality:
\begin{equation}\label{eq:matrix-scalar-duality}
  P_r(\mathbf w;\mathbf A)
  =P_{-r}(\mathbf w;\mathbf A^{-1})^{-1},
  \qquad
  p_r(\mathbf w;\mathbf a_i)
  =p_{-r}\bigl(\mathbf w;(a_{j,i}^{-1})_{j=1}^m\bigr)^{-1}.
\end{equation}
The decreasingly ordered eigenvalues of the inverse in
\eqref{eq:matrix-scalar-duality} are the reciprocals of the increasingly
ordered eigenvalues before inversion.  Taking reciprocals in
\eqref{eq:positive-product-for-inverses} therefore gives
\begin{equation}\label{eq:duality-product-bound}
 \prod_{i=1}^k
 \lambda_i^{\downarrow}\bigl(P_r(\mathbf w;\mathbf A)\bigr)
 \le
 \prod_{i=1}^k p_r(\mathbf w;\mathbf a_i)
 \qquad (1\le k\le n).
\end{equation}
For every $j$, the sequence $i\mapsto a_{j,i}$ is decreasing, and the
scalar power mean is increasing in each argument.  Thus the terms on the
right of \eqref{eq:duality-product-bound} are already decreasing in
$i$.  Consequently, \eqref{eq:duality-product-bound} is precisely the
weak log-majorization \eqref{eq:main-restated}, and hence
\eqref{eq:main}.
\end{proof}

\begin{proof}[Proof of Corollary~\ref{cor:schatten-quasinorm}]
We use the weak-majorization form of Karamata's inequality: if
$x\prec_w y$ and $f$ is increasing and convex on an interval containing
their entries, then $\sum_i f(x_i)\le\sum_i f(y_i)$.  By
Theorem~\ref{thm:main}, the logarithms of the eigenvalues on the left of
\eqref{eq:main} are weakly majorized by the logarithms of the entries on
the right.  Applying this result with $f(u)=e^{qu}$ gives
\begin{equation}\label{eq:schatten-proof}
 \bigl\|P_r(\mathbf w;\mathbf A)\bigr\|_q^q
 \le \sum_{i=1}^n
 p_r\bigl(\mathbf w;(\lambda_i^\downarrow(A_j))_{j=1}^m\bigr)^q
 =\sum_{i=1}^n
 p_{r/q}\bigl(\mathbf w;((\lambda_i^\downarrow(A_j))^q)_{j=1}^m\bigr).
\end{equation}
Here the equality follows immediately from the definition of the scalar
power mean.

Since $r/q<0$, the scalar power mean $p_{r/q}$ is concave on the positive
cone; this follows by applying the Cauchy--Schwarz inequality to its
Hessian.  Its positive homogeneity then implies superadditivity.  Applying
this to the summands in \eqref{eq:schatten-proof} yields
\begin{align}
 \sum_{i=1}^n
 p_{r/q}\bigl(\mathbf w;((\lambda_i^\downarrow(A_j))^q)_{j=1}^m\bigr)
 &\le
 p_{r/q}\left(\mathbf w;
   \left(\sum_{i=1}^n(\lambda_i^\downarrow(A_j))^q\right)_{j=1}^m\right)
   \label{eq:schatten-superadditive}\\
 &=p_r\bigl(\mathbf w;(\|A_j\|_q)_{j=1}^m\bigr)^q.
   \label{eq:schatten-final}
\end{align}
Combining these inequalities and taking the $q$-th root proves
\eqref{eq:schatten-quasinorm}.
\end{proof}

\newpage
\appendix
\section{Lean correspondence}
\label{app:lean-correspondence}

The accompanying Lean development~\cite{TomamichelFormalization} follows the manuscript proof.
Figure~\ref{fig:lean-dependencies} gives the dependency graph (arrows point
to ingredients and blue boxes mark the four published inputs), while
Table~\ref{tab:lean-correspondence} gives the statement-by-statement
correspondence.

Every statement in Table~\ref{tab:lean-correspondence} is formalized in
Lean using Mathlib and checked by the Lean kernel.  The four published
inputs (Definition~\ref{def:power-means} and
Facts~\ref{fact:ando-compression}, \ref{fact:multiplicative-lidskii}, and
\ref{fact:cauchy-interlacing}) are proved in the accompanying development,
so none is assumed as an axiom or hypothesis in the final declarations
(Theorem~\ref{thm:main} and
Corollary~\ref{cor:schatten-quasinorm}).  No project-specific axioms are
introduced.
The measure extension in
Remark~\ref{rem:probability-measures} is outside
the development.

\medskip

\begin{figure}[!ht]
\centering
\makebox[\textwidth][c]{%
\resizebox{0.95\linewidth}{!}{%
\begin{tikzpicture}[
  proofnode/.style={draw=black!60, rounded corners=2pt, align=center,
    fill=white, inner sep=2.5pt, text width=2.5cm, text height=1.55ex,
    text depth=0.35ex, font=\scriptsize},
  mainnode/.style={proofnode, fill=green!10, draw=green!45!black,
    line width=0.7pt, text width=3.0cm, font=\scriptsize},
  boundary/.style={proofnode, fill=blue!10, draw=blue!60!black},
  edge/.style={-{Stealth[length=1.5mm]}, thin, draw=black!62},
  every path/.style={edge}
]
  \node[mainnode] (main) at (0,0) {Theorem~\ref{thm:main}};
  \node[mainnode] (schatten) at (4.5,0)
    {Corollary~\ref{cor:schatten-quasinorm}};

  \node[proofnode] (positive) at (-2.75,-0.75)
    {Theorem~\ref{thm:positive-lower-log}};
  \node[boundary] (powerdef) at (2.75,-0.75)
    {Definition~\ref{def:power-means}};

  \node[proofnode] (detsuper) at (-4.35,-1.6)
    {Proposition~\ref{prop:determinant}};
  \node[proofnode] (geoshort) at (-1.45,-1.6)
    {Lemma~\ref{lem:short-geometric}};
  \node[proofnode] (shortsum) at (1.45,-1.6)
    {Lemma~\ref{lem:short-superadditive}};
  \node[proofnode] (shortspec) at (4.35,-1.6)
    {Lemma~\ref{lem:short-spectrum}};

  \node[proofnode] (lidtrace) at (-4.35,-2.45)
    {Lemma~\ref{lem:lidskii-trace}};
  \node[proofnode] (supernorm) at (-1.45,-2.45)
    {Lemma~\ref{lem:normalized-supersolution}};
  \node[boundary] (ando) at (1.45,-2.45)
    {Fact~\ref{fact:ando-compression}};
  \node[boundary] (interlace) at (4.35,-2.45)
    {Fact~\ref{fact:cauchy-interlacing}};

  \node[boundary] (lidskii) at (-4.35,-3.2)
    {Fact~\ref{fact:multiplicative-lidskii}};

  \draw (main) -- (positive);
  \draw (main) -- (powerdef);
  \draw (schatten) -- (main);
  \draw (positive) -- (powerdef);

  \coordinate (pbus) at (-2.75,-1.175);
  \draw[-] (positive.south) -- (pbus);
  \draw (pbus) -| (detsuper.north);
  \draw (pbus) -| (geoshort.north);
  \draw (pbus) -| (shortsum.north);
  \draw (pbus) -| (shortspec.north);

  \coordinate (dbus) at (-4.35,-2.025);
  \draw[-] (detsuper.south) -- (dbus);
  \draw (dbus) -| (lidtrace.north);
  \draw (dbus) -| (supernorm.north);
  \draw (lidtrace) -- (lidskii);

  \draw (geoshort) -- (ando);
  \draw (shortspec) -- (interlace);
\end{tikzpicture}
}%
}
\caption{Dependency graph for the Lean formalization.}
\label{fig:lean-dependencies}
\end{figure}

\setcounter{table}{1}
\begin{table}[!ht]
\centering
\begingroup
\setlength{\tabcolsep}{3pt}
\renewcommand{\arraystretch}{1.02}
\begin{tabular}{@{}
  >{\raggedright\arraybackslash}p{0.2\textwidth}
  >{\raggedright\arraybackslash}p{0.55\textwidth}
  >{\raggedright\arraybackslash}p{0.15\textwidth}@{}}
\toprule
\textbf{Manuscript statement} & \textbf{Lean declaration} & \textbf{Status}\\
\midrule
Definition~\ref{def:power-means}
& \nolinkurl{FixedPoint}; \nolinkurl{powerMean};
  \nolinkurl{negativePowerMean}
& Formalized \\

Theorem~\ref{thm:main}
& \nolinkurl{negativePowerMean_weakLogMajorized}
& Formalized\\

Corollary~\ref{cor:schatten-quasinorm}
& \nolinkurl{negativePowerMean_schattenQ_le}
& Formalized\\

Fact~\ref{fact:ando-compression}
& \nolinkurl{Ando}
& Formalized \\

Fact~\ref{fact:multiplicative-lidskii}
& \nolinkurl{Lidskii}
& Formalized \\

Fact~\ref{fact:cauchy-interlacing}
& \nolinkurl{Interlacing}
& Formalized \\

Lemma~\ref{lem:normalized-supersolution}
& \nolinkurl{IsSuper.norm}
& Formalized\\

Lemma~\ref{lem:lidskii-trace}
& \nolinkurl{Lidskii.trace}
& Formalized\\

Proposition~\ref{prop:determinant}
& \nolinkurl{IsSuper.powerCoordinateProd_le_det}
& Formalized\\

Lemma~\ref{lem:short-superadditive}
& \nolinkurl{shorted_superadditive}
& Formalized\\

Lemma~\ref{lem:short-geometric}
& \nolinkurl{weightedGeometricMean_shorted_le}
& Formalized\\

Lemma~\ref{lem:short-spectrum}
& \nolinkurl{increasingEigenvalues_le_shorted}
& Formalized\\

Theorem~\ref{thm:positive-lower-log}
& \nolinkurl{powerMean_lowerProductDominates}
& Formalized\\
\bottomrule
\end{tabular}
\endgroup
\caption{Correspondence between manuscript statements and Lean declarations.}
\label{tab:lean-correspondence}
\end{table}

\end{document}